\newtheorem{theorem}{Theorem}[section]
\newtheorem{lemma}[theorem]{Lemma}
\newtheorem{corollary}[theorem]{Corollary}
\newtheorem{example}[theorem]{Example}
\def\barr{\begin{array}}
\def\earr{\end{array}}
\title{On CLT and non-CLT groups}
\author{Marius T\u arn\u auceanu}
\date{March 9, 2024}
\begin{document}

\maketitle

\begin{abstract}
In this note, we prove that for every integer $d\geq 2$ which is not a prime power, there exists a finite solvable group $G$ such that $d\mid |G|$, $\pi(G)=\pi(d)$ and $G$ has no subgroup of order $d$. We also introduce the CLT-degree of a finite group and answer two questions about it.
\end{abstract}

{\small
\noindent
{\bf MSC2020\,:} Primary 20D60; Secondary 20D99, 20E99.

\noindent
{\bf Key words\,:} CLT group, solvable group, Frobenius group.}

\section{Introduction}

All groups considered in this note are finite. A group is said to be CLT if it possesses subgroups of every possible order (that is, it satisfies the Converse of Lagrange's Theorem) and non-CLT
otherwise. It is well-known that CLT groups are solvable (see \cite{11}) and that supersolvable groups are CLT (see \cite{10}). Moreover, the inclusion between the classes of CLT groups
and solvable groups, as well as the inclusion between the classes of supersolvable groups and CLT groups are proper (see, for example, \cite{3}). Recall also the papers \cite{1,2,13} which construct non-CLT groups of order $p^{\alpha}q^{\beta}$ with $p,q$ primes and $\alpha,\beta\in\mathbb{N}^*$.

The first starting point for our work is given by the smallest examples of non-CLT groups, namely $A_4$ and ${\rm SL}(2,3)$. It is well-known that they have no subgroup of order $6$ and $12$, respectively. This leads to the following natural question:
\begin{center}
\textit{Given an integer $d\geq 2$, is it possible to construct a group\\ whose order is divisible by $d$, but having no subgroup of order $d$}?
\end{center}Obviously, the answer to this question is "NO" if $d$ is a prime power. So, in what follows we will assume that $d$ is a composite number and we will denote by $\pi(d)$ the set of primes dividing $d$. Also, for a finite group $G$ we will denote $\pi(G)=\pi(|G|)$.

Our first result shows that the answer to the above question is "YES" for all composite numbers $d$.

\begin{theorem}
For every integer $d\geq 2$ which is not a prime power, there exists a finite solvable group $G$ such that $d\mid |G|$, $\pi(G)=\pi(d)$ and $G$ has no subgroup of order $d$.
\end{theorem}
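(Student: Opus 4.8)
The plan is to realize $G$ as a Frobenius group $G = K \rtimes H$, generalizing the example $A_4 = C_2^2 \rtimes C_3$, in which $K$ is an elementary abelian $p$-group for a single, carefully chosen prime $p \mid d$ and $H$ is cyclic with $\pi(H) = \pi(d)\setminus\{p\}$. The guiding principle is the well-known description of subgroups of a Frobenius group: if $U \le G$, then $U \cap K \trianglelefteq U$ and $U$ splits as $(U\cap K)\rtimes C$ with $|C|$ dividing $|H|$, so $|U| = |U\cap K|\cdot|C|$ with $|U\cap K|$ a power of $p$; moreover, when both $U\cap K$ and $C$ are nontrivial, $U$ is itself a Frobenius group, whence $|C|$ divides $|U\cap K|-1$. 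I would quote this structure theorem and make it the sole source of the obstruction.

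First I would choose the prime. Writing $d = p_1^{a_1}\cdots p_k^{a_k}$ with $k \ge 2$ and setting $m_i = d/p_i^{a_i}$, a short counting argument shows that some index satisfies $p_i^{a_i} \le \sqrt d$: otherwise every factor would exceed $\sqrt d$, giving the contradiction $d = \prod_j p_j^{a_j} > (\sqrt d)^{\,k} \ge d$. Fixing such an $i$ and putting $p = p_i$, $a = a_i$, $m = m_i$, we get $m = d/p^a \ge \sqrt d \ge p^a > p^a - 1$, so $m$ is a positive integer strictly larger than $p^a-1$ and therefore $m \nmid p^a - 1$. This single non-divisibility is exactly the numerical fact the construction exploits.

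Next I would build the group. Since $\gcd(m,p)=1$, let $b_0 = \mathrm{ord}_m(p)$ and choose a multiple $b$ of $b_0$ with $b \ge a$ (for instance $b = a\,b_0$), so that $m \mid p^{b_0}-1 \mid p^b-1$. Take $K = (\mathbb{F}_{p^b},+)$ and let $H$ be the cyclic subgroup of order $m$ of the multiplicative group $\mathbb{F}_{p^b}^{*}$, acting on $K$ by multiplication. Since a nonidentity field multiplier fixes only $0$, this action is fixed-point-free, so $G = K \rtimes H$ is a Frobenius group; being metabelian it is solvable. By construction $|G| = p^b m$, so $d = p^a m \mid |G|$, and $\pi(G) = \{p\}\cup\pi(m) = \pi(d)$.

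Finally I would rule out a subgroup of order $d$. If $U \le G$ had $|U| = p^a m$, then writing $|U\cap K| = p^c$ and letting $|C|$ be the $p'$-part (a divisor of $m$), the equality $p^c|C| = p^a m$ forces $c = a$ and $|C| = m$; in particular both $U\cap K$ and $C$ are nontrivial, so $U$ is a Frobenius group with kernel of order $p^a$ and complement of order $m$, yielding $m \mid p^a - 1$ and contradicting the choice of $p$. Hence no subgroup of order $d$ exists. I expect the only delicate point to be invoking the subgroup structure of Frobenius groups cleanly, especially the fact that a subgroup meeting both the kernel and a complement nontrivially is again Frobenius; the prime selection and the order bookkeeping are then routine.
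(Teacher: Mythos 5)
Your proof is correct, and it takes a genuinely different route from the paper's. The paper argues by induction on $d$: for $k=2$ it uses multiplicative orders (showing that either $\mathrm{ord}_{p_2^{n_2}}(p_1)\nmid n_1$ or $\mathrm{ord}_{p_1^{n_1}}(p_2)\nmid n_2$), places the witness inside the Frobenius group ${\rm AGL}(1,p_1^{ra})$, and gets the contradiction from exactly the two Frobenius facts you quote; for $k\geq 3$ it strips off one prime power, invokes the induction hypothesis, takes a direct product with a cyclic group, and rules out a subgroup of order $d$ using Hall subgroups of solvable groups. You collapse both cases into one: the pigeonhole choice of a factor $p^a\leq\sqrt{d}$ forces $m=d/p^a\geq\sqrt{d}\geq p^a>p^a-1$, so $m\nmid p^a-1$ by size alone --- a more elementary substitute for the paper's order-theoretic arithmetic --- and your cyclic complement $C_m\leq\mathbb{F}_{p^b}^{\times}$ absorbs all the remaining primes simultaneously, so the case $k\geq 3$ needs no induction and no Hall theory. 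Both proofs ultimately rest on the same two lemmas (a subgroup meeting the kernel nontrivially and not contained in it is again Frobenius with kernel the intersection; a Frobenius complement's order divides the kernel's order minus one), and your group is, like the paper's, a subgroup of an affine group ${\rm AGL}(1,p^b)$. What the paper's route buys is the two-prime base construction with small explicit examples (such as ${\rm SmallGroup}(360,123)$ for $d=60$) and a product structure that mirrors the factorization of $d$; what yours buys is uniformity and economy of means. One cosmetic remark: you do not actually need the splitting $U=(U\cap K)\rtimes C$ (which anyway follows from Schur--Zassenhaus, $U\cap K$ being a normal Hall subgroup of $U$); the order bookkeeping already follows from $|U|=|U\cap K|\cdot|UK/K|$ together with $UK/K\leq G/K\cong H$.
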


The second starting point for our work is given by a question of Martino Garonzi on MathOverflow \cite{4}:
\begin{center}
\textit{Is there a constant $c>0$ such that $\frac{D(G)}{\tau(|G|)}>c$ for any finite group $G$}?
\end{center}Here $D(G)$ denotes the number of divisors $d$ of $|G|$ for which there exists a subgroup of $G$ of order $d$ and $\tau(|G|)$ denotes the number of all divisors of $|G|$. Note that this question remained unanswered for more than ten years.

It suggests to consider the function
\begin{equation}
d_{CLT}(G)=\frac{D(G)}{\tau(|G|)}\,,\nonumber
\end{equation}which will be called the \textit{CLT-degree} of $G$. Clearly, $d_{CLT}(G)$ measures the probability of a finite group $G$ to be CLT. It is easy to see that this new function satisfies the following properties:
\begin{itemize}
\item[-] $0<d_{CLT}(G)\leq 1$, for any finite group $G$. Moreover, $d_{CLT}(G)=1$ if and only if $G$ is CLT.
\item[-] $d_{CLT}$ is multiplicative, that is $d_{CLT}(G_1\times G_2)=d_{CLT}(G_1)d_{CLT}(G_2)$, for any finite groups $G_1$, $G_2$ of coprime orders.
\item[-] If $N$ is a normal subgroup of $G$, then 
\begin{equation}
d_{CLT}(G/N)\leq\frac{\tau(|G|)}{\tau(|G|/|N|)}\,d_{CLT}(G).\nonumber
\end{equation}
\item[-] If $|G|=p_1^{n_1}\cdots p_k^{n_k}$ with $k\geq 2$, then 
\begin{equation}
d_{CLT}(G)\geq\frac{\sum_{i=1}^kn_i+2}{\prod_{i=1}^k(n_i+1)}\,.\nonumber 
\end{equation}Moreover, we have equality if and only if $G$ has all proper subgroups of prime power order. Such a group is either cyclic of order $pq$, where $p$ and $q$ are distinct primes, or a semidirect product $C_p^n\rtimes C_q$, where $C_q$ acts irreducibly on $C_p^n$. Thus
\begin{equation}
d_{CLT}(C_p^n\rtimes C_q)=\frac{n+3}{2n+2}\nonumber
\end{equation}and, in particular, $d_{CLT}(A_4)=\frac{5}{6}$\,.
\end{itemize}

Let $\mathcal{G}$ be the class of all finite groups. Our second result is stated as follows. 

\begin{theorem}
The set 
\begin{equation}
{\rm Im}(d_{CLT})=\{d_{CLT}(G)\mid G\in\mathcal{G}\}\nonumber 
\end{equation}is dense in $[0,1]$.
\end{theorem}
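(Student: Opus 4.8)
The plan is to reduce the statement to a single construction, namely a family of groups whose CLT-degree tends to $1$ from below, and then to fill $[0,1]$ by a soft argument using multiplicativity. The soft part is the following observation: suppose $G$ is a finite group with $1-\varepsilon<w<1$, where $w=d_{CLT}(G)$. For each $j$ I can realize $j$ pairwise coprime copies $G_1,\dots,G_j$ of the same construction as $G$ on disjoint sets of primes; each has the same lattice of subgroup orders, hence $d_{CLT}(G_i)=w$, and by multiplicativity $d_{CLT}(G_1\times\cdots\times G_j)=w^j$. Thus all $w^j$ $(j\ge 0)$ lie in $\mathrm{Im}(d_{CLT})$. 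Since $w^0=1$, $w^j\to 0$, and consecutive gaps satisfy $w^j-w^{j+1}=w^j(1-w)<1-w<\varepsilon$, the set $\{w^j\}$ is $\varepsilon$-dense in $[0,1]$. Hence it suffices to produce, for every $\varepsilon>0$, a finite group $G$ with $1-\varepsilon<d_{CLT}(G)<1$; equivalently a sequence $G_m$ with $d_{CLT}(G_m)\to 1^-$.

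For the building blocks I would look for groups of order $p^aq$ missing exactly one divisor, since such a group has $d_{CLT}=1-\frac{1}{2(a+1)}\to 1$. Take $G=C_p^a\rtimes C_q$, where the coprimality of $p$ and $q$ makes $C_p^a$ a semisimple $\mathbb{F}_p[C_q]$-module. Subspaces of every dimension $0,\dots,a$ give the subgroups of $p$-power order, while a subgroup of order $p^iq$ exists if and only if $C_q$ normalizes some subspace of dimension $i$ (using that $C_p^a$ is the unique, normal Sylow $p$-subgroup and that every Sylow $q$-subgroup is conjugate to $C_q$). Choosing the module to be $m$ copies of the trivial module together with one nontrivial irreducible of dimension $f$, the attainable submodule dimensions are exactly $\{0,\dots,m\}\cup\{f,\dots,f+m\}$. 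Setting $f=m+2$ and $a=m+f=2m+2$ leaves a single gap at dimension $m+1$, so $G$ misses precisely the order $p^{m+1}q$ and $d_{CLT}(G)=1-\frac{1}{4m+6}$.

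The remaining point is realizability: I need $\mathrm{ord}_q(p)=f=m+2$ so that a nontrivial irreducible $\mathbb{F}_p[C_q]$-module of dimension $f$ exists, and I need infinitely many such prime pairs to form the coprime powers above. Both follow from Dirichlet's theorem: pick a prime $q\equiv 1\pmod{m+2}$, choose a residue of exact order $m+2$ in the cyclic group $(\mathbb{Z}/q\mathbb{Z})^\times$, and take (infinitely many) primes $p$ in that residue class. Combining the two parts gives, for each $m$, a block value $w_m=1-\frac{1}{4m+6}$ together with the $\varepsilon_m$-dense set $\{w_m^j\}_{j\ge0}$, where $\varepsilon_m=\frac{1}{4m+6}\to 0$; therefore $\mathrm{Im}(d_{CLT})$ is dense in $[0,1]$.

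I expect the main obstacle to be the block construction rather than the density argument. Note that the explicit formula $d_{CLT}(C_p^n\rtimes C_q)=\frac{n+3}{2n+2}$ from the introduction only yields values in $(\tfrac12,\tfrac56]$, bounded away from $1$, so a genuinely new (reducible) action is needed. The delicate steps are the equivalence between subgroups of order $p^iq$ and $C_q$-invariant subspaces of dimension $i$, and the exact computation of attainable submodule dimensions for a direct sum of non-isomorphic isotypic components, which together guarantee that $p^{m+1}q$ is the \emph{only} missing order. Once these are verified, the passage to density via multiplicativity is immediate.
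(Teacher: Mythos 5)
Your proposal is correct, and it takes a genuinely different route from the paper in both of its main components. The paper also relies on multiplicativity of $d_{CLT}$ and Dirichlet's theorem, but its building blocks are the groups $G_{p,q}^n=\left(C_p^2\rtimes C_q\right)\times C_q^n$ with $q\mid p+1$, whose unique missing order is $pq^{n+1}$, so that $d_{CLT}(G_{p,q}^n)=\frac{3n+5}{3n+6}$; density is then obtained by realizing the products $\prod_{n\in I}\frac{3n+5}{3n+6}$ over finite index sets $I$ on pairwise disjoint prime pairs and invoking a lemma of Nitecki (the finite subsums of a null sequence with divergent series are dense in $[0,\infty)$) applied to $x_n=\ln\frac{3n+6}{3n+5}$. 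You replace both ingredients. First, your density mechanism is the elementary observation that the powers $w^j$ of a single value $w\in(1-\varepsilon,1)$, all realizable by coprime copies of one construction, are $\varepsilon$-dense in $[0,1]$; this removes the analytic subsum lemma entirely and makes the proof self-contained. Second, your building blocks are the single-gap groups $C_p^{2m+2}\rtimes C_q$ coming from the module $(\mathrm{trivial})^{\oplus m}\oplus(\mathrm{irreducible\ of\ dimension\ }m+2)$; the steps you flag as delicate are indeed the ones needing care, but they all hold: by Maschke's theorem the module is semisimple, every submodule splits along the two isotypic components (trivial versus nontrivial), so the attainable submodule dimensions are exactly $\{0,\dots,m\}\cup\{m+2,\dots,2m+2\}$; the correspondence between subgroups of order $p^iq$ and $C_q$-invariant subspaces of dimension $i$ follows from $H\cap V$ being normal in $H$ together with Sylow conjugacy; and $\mathrm{ord}_q(p)=m+2$ is realizable for infinitely many disjoint pairs $(p,q)$ by the two-step Dirichlet argument you give. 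One remark: your soft argument would combine directly with the paper's simpler family, since $\frac{3n+5}{3n+6}\to 1$ and those groups are likewise realizable on arbitrarily many disjoint prime pairs, so the reducible-module construction could be avoided; conversely, your construction has independent interest, producing non-CLT groups of order $p^aq$ that miss exactly one order and have degree $1-\frac{1}{2(a+1)}$ tending to $1$ without inflating the $q$-part, which the irreducible case $d_{CLT}(C_p^n\rtimes C_q)=\frac{n+3}{2n+2}$ cannot do.
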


Obviously, Theorem 1.2 gives a negative answer to Garonzi's question.

\begin{corollary} 
There is no constant $c>0$ such that $\frac{D(G)}{\tau(|G|)}>c$ for any finite group $G$.
\end{corollary}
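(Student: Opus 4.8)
The plan is to obtain the corollary as an immediate consequence of Theorem 1.2. The key point is that a positive uniform lower bound on $d_{CLT}$ is exactly what density at the left endpoint of $[0,1]$ rules out, so no separate construction is needed.

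Concretely, I would argue by contradiction. Suppose there were a constant $c>0$ with $\frac{D(G)}{\tau(|G|)}>c$ for every finite group $G$; equivalently $d_{CLT}(G)>c$ for all $G$, so that ${\rm Im}(d_{CLT})\subseteq(c,1]$. Then the nonempty open interval $(0,c)$, which lies inside $[0,1]$, would contain no element of ${\rm Im}(d_{CLT})$. But by Theorem 1.2 the set ${\rm Im}(d_{CLT})$ is dense in $[0,1]$, hence it must intersect every nonempty open subinterval of $[0,1]$, and in particular $(0,c)$ --- a contradiction. Therefore no such $c$ can exist.

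There is essentially no obstacle in the corollary itself: all the difficulty has already been packaged into Theorem 1.2, namely the construction of finite groups whose CLT-degree accumulates at $0$. The only thing to verify at this stage is the elementary fact that density of a subset of $[0,1]$ forces its infimum to be $0$ --- equivalently, that it meets every interval $(0,\varepsilon)$ --- which is immediate from the definition of density applied to the left-hand subintervals. Thus the corollary simply records that Garonzi's proposed bound fails, and its proof is a one-line deduction from the density statement of Theorem 1.2.
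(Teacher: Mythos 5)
Your argument is correct and is exactly the paper's route: the paper deduces this corollary immediately from Theorem 1.2 (it simply remarks that the density of ${\rm Im}(d_{CLT})$ in $[0,1]$ ``obviously'' gives a negative answer to Garonzi's question), and your contradiction argument via the empty intersection with $(0,c)$ just spells out that one-line deduction.
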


We also infer that:

\begin{corollary} 
There is no constant $c<1$ such that $d_{CLT}(G)\leq c$ for any finite non-CLT group $G$.
\end{corollary}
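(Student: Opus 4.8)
The plan is to deduce Corollary 1.4 directly from Theorem 1.2, since the two are essentially two faces of the same fact about the distribution of $d_{CLT}$ near $1$. The corollary asserts that $\sup\{d_{CLT}(G)\mid G \text{ non-CLT}\}=1$ with the supremum not attained, so what I really need is a supply of non-CLT groups whose CLT-degree is arbitrarily close to $1$. The key observation linking this to Theorem 1.2 is the first listed property of $d_{CLT}$: one has $d_{CLT}(G)=1$ if and only if $G$ is CLT. Consequently \emph{every} group $G$ with $d_{CLT}(G)<1$ is automatically non-CLT, so any image value strictly between $c$ and $1$ is necessarily realised by a non-CLT group.

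Concretely, I would argue by contradiction. Suppose some constant $c<1$ satisfied $d_{CLT}(G)\leq c$ for every finite non-CLT group $G$. Combining this with $d_{CLT}(G)=1$ for CLT groups, the entire image ${\rm Im}(d_{CLT})$ would be confined to $[0,c]\cup\{1\}$, a set omitting the nonempty open interval $(c,1)$ and hence not dense in $[0,1]$. This contradicts Theorem 1.2. Unwinding the contradiction: for each $c<1$, the density of ${\rm Im}(d_{CLT})$ forces an image point in $(c,1)$; the group $G$ realising it has $d_{CLT}(G)<1$, hence is non-CLT, and satisfies $d_{CLT}(G)>c$, which is exactly what the corollary demands.

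It is worth recording why a naive construction does not suffice, so that the appeal to Theorem 1.2 is seen to be genuinely necessary. One might hope to take a fixed small non-CLT group such as $A_4$ and pad it out by a coprime CLT factor $K$; but multiplicativity gives $d_{CLT}(A_4\times K)=d_{CLT}(A_4)\,d_{CLT}(K)=\tfrac{5}{6}$, so such products never climb above $\tfrac{5}{6}$. Likewise the explicit family $C_p^n\rtimes C_q$ has $d_{CLT}=\tfrac{n+3}{2n+2}$, which \emph{decreases} toward $\tfrac12$ as $n$ grows. Reaching values near $1$ requires non-CLT groups in which the defect from CLT accounts for a vanishingly small proportion of the divisors, and it is precisely the construction behind Theorem 1.2 that produces them.

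I expect no serious obstacle once Theorem 1.2 is in hand: the corollary is a short logical consequence, the only point needing care being the equivalence ``$d_{CLT}(G)=1$ iff $G$ is CLT'', which is what guarantees that all image values approaching $1$ from below are contributed by genuinely non-CLT groups rather than by CLT ones.
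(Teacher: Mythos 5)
Your proposal is correct and follows essentially the same route as the paper: Corollary 1.4 is stated there as an inference from Theorem 1.2, and your contradiction argument (the image would be trapped in $[0,c]\cup\{1\}$, violating density, with the equivalence $d_{CLT}(G)=1$ iff $G$ is CLT guaranteeing that image points in $(c,1)$ come from non-CLT groups) is exactly the reasoning the paper leaves implicit. The paper also records a second, direct proof at the end of Section 3 via the explicit non-CLT family $G_{p,q}^n=\left(C_p^2\rtimes C_q\right)\times C_q^n$ with $d_{CLT}(G_{p,q}^n)=\frac{3n+5}{3n+6}\rightarrow 1$, which realizes concretely the kind of family your last paragraph correctly anticipates is needed.
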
Note that an example of a family of finite non-CLT groups $(G_n)_{n\geq 1}$ with $\lim_{n\rightarrow\infty}d_{CLT}(G_n)=1$ will be given at the end of Section 3.
\bigskip

Finally, we formulate a natural open problem related to our results.

\bigskip\noindent{\bf Open problem.} Is it true that $\lim_{n\rightarrow\infty}d_{CLT}(S_n)=0$?
\bigskip

Note that we have
\begin{equation}
d_{CLT}(S_n)\leq\frac{\#iso(S_n)}{\tau(n!)}\leq\frac{\#ccs(S_n)}{\tau(n!)}\,,\nonumber
\end{equation}where $\#iso(S_n)$ and $\#ccs(S_n)$ denote the number of isomorphism/conjugacy classes of subgroups of $S_n$, but we were not able to decide whether these ratios tend to $0$ when $n$ tends to infinity.
\bigskip

Most of our notation is standard and will usually not be repeated here. For basic notions and results on groups we refer the reader to \cite{6,7}.

\section{Proof of Theorem 1.1}

First of all, we present two auxiliary results. Recall that a \textit{Frobenius group} $NH$ with kernel $N$ and complement $H$ can be characterized as a finite group that is a semidirect product of a normal subgroup $N$ by a subgroup $H$ such that $C_N(h)=1$ for every $h\in H\setminus\{1\}$. An important example of such a group is the group ${\rm AGL}(1,q)=\mathbb{F}_q\rtimes\mathbb{F}_q^{\times}$ of affine linear transformations of the finite field $\mathbb{F}_q$.

\begin{lemma}([5])
Let $G$ be a Frobenius group with kernel $N$ and $K$ be a subgroup of $G$. Then one of the following holds:
\begin{itemize}
\item[{\rm 1)}] $K\subseteq N$.
\item[{\rm 2)}] $K\cap N=1$.
\item[{\rm 3)}] $K$ is a Frobenius group with kernel $N\cap K$.
\end{itemize}
\end{lemma}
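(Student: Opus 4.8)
The plan is to analyze the subgroup $M := K \cap N$, which is normal in $K$ because $N \trianglelefteq G$. The three alternatives of the lemma correspond exactly to a case split governed by $M$ together with whether $K$ lies inside $N$. If $K \subseteq N$ we are in case 1), and if $M = 1$ we are in case 2). So the entire content lies in the remaining case, where $K \not\subseteq N$ and $M \neq 1$; here I must show that $K$ is a Frobenius group with kernel $M = N \cap K$. Note that in this case $M$ is automatically a proper, nontrivial normal subgroup of $K$ (proper since $K \not\subseteq N$, nontrivial by assumption), which is the first requirement in the definition of a Frobenius group recalled above.

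The key structural input about $G$ that I would isolate first is that $C_N(g) = 1$ for every $g \in G \setminus N$. This follows from the standard description of a Frobenius group: the complements of $N$ are precisely the conjugates of $H$, they intersect pairwise in the identity and are self-normalizing, so a counting argument shows that their nonidentity elements together with $N$ partition $G$; hence every $g \notin N$ is of the form $g = h^{x}$ with $h \in H \setminus \{1\}$ and $x \in G$, and then $C_N(g) = C_N(h^{x}) = C_N(h)^{x} = 1$ since $N \trianglelefteq G$ and $C_N(h) = 1$. I would also record the classical fact that $\gcd(|N|, |H|) = 1$ (indeed $H$ acts fixed-point-freely on $N \setminus \{1\}$, so $|H|$ divides $|N| - 1$).

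With these two facts in hand, the argument in the main case proceeds as follows. Since $M = K \cap N$, the second isomorphism theorem gives $K/M \cong KN/N \leq G/N \cong H$, so $[K : M]$ divides $|H|$, while $|M|$ divides $|N|$; by coprimality of $|N|$ and $|H|$ we conclude $\gcd(|M|, [K:M]) = 1$. The Schur--Zassenhaus theorem then produces a complement $L$ with $K = M \rtimes L$, and since every $\ell \in L \setminus \{1\}$ lies in $K \setminus M \subseteq G \setminus N$, the centralizer property gives $C_M(\ell) \leq C_N(\ell) = 1$. Thus $K = M \rtimes L$ satisfies the Frobenius condition with kernel $M$, completing case 3). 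The step I expect to be the main obstacle is precisely the production and control of the complement $L$: verifying the coprimality that feeds Schur--Zassenhaus and checking that the fixed-point-free condition for $L$ on $M$ is inherited from that of $G$. By contrast, the trichotomy itself and the two degenerate cases are immediate.
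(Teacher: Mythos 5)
The paper does not prove this lemma at all: it is quoted verbatim from Grove's book \cite{5}, so there is no in-paper argument to compare against. Your proof is correct and is essentially the standard one. The trichotomy reduction and the two degenerate cases are as you say; the substance is the case $1\neq M=K\cap N$ and $K\not\subseteq N$, and your two key inputs both hold. The fact that $C_N(g)=1$ for every $g\in G\setminus N$ does follow from the paper's semidirect-product definition: for $n\in N\setminus\{1\}$ and $h,h'\in H$ with $h=(h')^n$, the element $(h')^{-1}n^{-1}h'n$ lies in $H\cap N=1$, forcing $n\in C_N(h')=1$; this gives $H\cap H^n=1$ and $N_G(H)=H$, whence the counting argument yields the partition of $G\setminus N$ into nonidentity elements of the $|N|$ conjugates of $H$. (Your parenthetical claim that the complements of $N$ are \emph{precisely} the conjugates of $H$ is a stronger conjugacy statement that you never use; only the trivial pairwise intersections and self-normalization matter.) The coprimality $\gcd(|M|,[K:M])=1$ via the second isomorphism theorem and $|H|\mid |N|-1$ is fine, and you only invoke the \emph{existence} half of Schur--Zassenhaus, which holds unconditionally, so no hidden appeal to solvability or Feit--Thompson is needed. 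Finally, $K\setminus M\subseteq G\setminus N$ gives $C_M(\ell)\leq C_N(\ell)=1$ for $\ell\in L\setminus\{1\}$, and your observation that $1\neq M\neq K$ guarantees the Frobenius structure on $K$ is nondegenerate. One remark: given the paper's definition of a Frobenius group as a semidirect product, some appeal to Schur--Zassenhaus (or an equivalent coprime-complement argument) is essentially unavoidable here, since case 3) requires producing an actual complement to $M$ in $K$; so the heaviest tool in your proof is not an artifact of your route but intrinsic to the statement under this definition.
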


\begin{lemma}([8])
Let $G$ be a Frobenius group with kernel $N$ and complement $H$. Then $|H|\mid |N|-1$.
\end{lemma}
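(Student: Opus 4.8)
The plan is to exploit the conjugation action of the complement $H$ on the kernel $N$ and count orbits on the set of nonidentity elements. Since $N$ is normal in $G$, conjugation by elements of $H$ gives a well-defined action of $H$ on $N$, and this action fixes the identity $1\in N$. Hence $H$ acts on the set $N\setminus\{1\}$, which has exactly $|N|-1$ elements. The whole argument will come down to showing that this action is \emph{semiregular} (free), i.e.\ that the stabilizer in $H$ of every point of $N\setminus\{1\}$ is trivial.

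The key step is to identify the stabilizers with centralizers. For $h\in H$ and $n\in N$, the element $h$ fixes $n$ under conjugation precisely when $hnh^{-1}=n$, that is, when $n$ commutes with $h$, which is equivalent to $n\in C_N(h)$. The defining property of a Frobenius complement says $C_N(h)=1$ for every $h\in H\setminus\{1\}$. Therefore, if some $h\neq 1$ fixes an element $n\in N\setminus\{1\}$, then $n\in C_N(h)=\{1\}$, forcing $n=1$, a contradiction. Consequently the stabilizer of each $n\in N\setminus\{1\}$ meets $H$ only in the identity, so the action of $H$ on $N\setminus\{1\}$ is semiregular.

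Finally, I would invoke the orbit--stabilizer theorem: with all point stabilizers trivial, every $H$-orbit in $N\setminus\{1\}$ has size exactly $|H|$. Since these orbits partition $N\setminus\{1\}$ into blocks of equal size $|H|$, we obtain $|H|\mid |N|-1$, as claimed. There is essentially no serious obstacle in this argument; the only point requiring care is the correct translation of ``$h$ fixes $n$ by conjugation'' into the centralizer condition $n\in C_N(h)$, after which the Frobenius hypothesis applies directly and the divisibility follows immediately from counting.
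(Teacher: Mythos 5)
Your proof is correct: the conjugation action of $H$ on $N\setminus\{1\}$ is indeed semiregular precisely because $C_N(h)=1$ for all $h\in H\setminus\{1\}$, and the orbit count gives $|H|\mid |N|-1$. The paper itself offers no proof of this lemma --- it simply cites it from Karpilovsky \cite{8} --- and your argument is the standard one, using exactly the characterization of Frobenius groups stated in Section 2, so it supplies in a self-contained way what the paper delegates to the reference.
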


We are now able to prove our first main result.

\bigskip\noindent{\bf Proof of Theorem 1.1.} 
We will proceed by induction on $d$. For $d=6$ we can choose $G=A_4$. 
Now, let $d\geq 6$ and assume the statement to be true for every $d'<d$. Let $d=p_1^{n_1}\cdots p_k^{n_k}$ be the decomposition of $d$ as a product of distinct prime factors, where $k\geq 2$. We distinguish the following two cases.
\medskip

\noindent\hspace{10mm}{\bf Case 1.} $k=2$
\smallskip

\noindent Then $d=p_1^{n_1}p_2^{n_2}$. Let $a=\exp_{p_2^{n_2}}(p_1)$ and $b=\exp_{p_1^{n_1}}(p_2)$, where $\exp_{p_i^{n_i}}(p_j)$ is the multiplicative order of $p_j$ modulo $p_i^{n_i}$. Then either $a\nmid n_1$ or $b\nmid n_2$. Indeed, if $a\mid n_1$ and $b\mid n_2$, then $p_2^{n_2}\mid p_1^{n_1}-1$ and $p_1^{n_1}\mid p_2^{n_2}-1$, implying that $p_2^{n_2}<p_1^{n_1}$ and $p_1^{n_1}<p_2^{n_2}$, a contradiction. 

Assume that $a\nmid n_1$ and let $r$ be a positive integer such that $(r-1)a<n_1<ra$. Then the Frobenius group 
\begin{equation}
{\rm AGL}(1,p_1^{ra})=C_{p_1}^{ra}\rtimes C_{p_1^{ra}-1}\nonumber 
\end{equation}has no subgroup of order $d$. Indeed, if there is $K\leq{\rm AGL}(1,p_1^{ra})$ with $|K|=d$, then $K$ must be a Frobenius group with kernel of order $p_1^{n_1}$ and complement of order $p_2^{n_2}$ by Lemma 2.1. Then Lemma 2.2 leads to $p_2^{n_2}\mid p_1^{n_1}-1$ and thus $a\mid n_1$, a contradiction. Now, it is clear that ${\rm AGL}(1,p_1^{ra})$ has a subgroup $G$ of order $p_1^{ra}p_2^{n_2}$: this $G$ is easily seen to
be a group that satisfies the desired conclusions with respect to $d=p_1^{n_1}p_2^{n_2}$.\newpage
\medskip

\noindent\hspace{10mm}{\bf Case 2.} $k\geq 3$
\smallskip

\noindent Let $d'=d/p_k^{n_k}$. Then $d'$ is not a prime power and so there exists a finite solvable group $G_1$ such that $d'\mid |G_1|$, $\pi(G_1)=\pi(d')$ and $G_1$ has no subgroup of order $d'$. Let $G=G_1\times C_{p_k^{n_k}}$. It follows that $G$ is solvable, $d\mid |G|$ and $\pi(G)=\pi(d)$. Moreover, $G$ has no subgroup of order $d$. Indeed, if $H\leq G$ has order $d=d'\cdot p_k^{n_k}$, then $H$ possesses a subgroup $H_1$ of order $d'$. Since $G$ is solvable, $H_1$ is contained in a Hall $\pi(d')$-subgroup of $G$, that is $H_1\subseteq G_1$. Thus $G_1$ contains subgroups of order $d'$, a contradiction.
\medskip

The proof of Theorem 1.1 is now complete.$\qed$
\bigskip

The following example is founded on the above proof.

\begin{example}
For $d=60=2^2\cdot 3\cdot 5$, we have $\exp_4(3)=2\nmid 1=n_2$ and therefore a finite solvable group $G$ such that $60\mid |G|$, $\pi(G)=\pi(60)$ and $G$ has no subgroup of order $60$ is \begin{equation}
{\rm SmallGroup}(360,123)={\rm AGL}(1,9)\times C_5=\left(C_3^2\rtimes C_8\right)\times C_5.\nonumber
 \end{equation}
\end{example}

\section{Proof of Theorem 1.2}

The proof of Theorem 1.2 follows the same steps as the proof of Theorem 1.1 in \cite{9}. It is based on the next lemma which is a
consequence of Proposition outlined on p. 863 of \cite{12}.

\begin{lemma}
Let $(x_n)_{n\geq 1}$ be a sequence of positive real numbers such that $\lim_{n\rightarrow\infty}x_n=0$ and $\sum_{n=1}^{\infty}x_n$ is divergent. Then the set containing the sums
of all finite subsequences of $(x_n)_{n\geq 1}$ is dense in $[0,\infty)$.
\end{lemma}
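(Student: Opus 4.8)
The plan is to prove the density statement directly from its definition: given a target $t\geq 0$ and a tolerance $\varepsilon>0$, I will exhibit a finite set of indices whose associated partial sum lies within $\varepsilon$ of $t$. The whole argument is a greedy ``staircase'' construction that uses the two hypotheses in complementary ways: the condition $x_n\to 0$ controls the size of the individual steps, while the divergence of $\sum x_n$ guarantees that the staircase climbs past any prescribed height.

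First I would use $x_n\to 0$ to fix an index $N$ with $x_n<\varepsilon$ for all $n\geq N$. Discarding the first $N-1$ terms is harmless, since removing finitely many terms from a divergent series of positive reals leaves a divergent series; hence $\sum_{n\geq N}x_n=+\infty$ as well. I then examine the consecutive partial sums $s_m=\sum_{n=N}^{m}x_n$, with the convention $s_{N-1}=0$. These are strictly increasing, tend to $+\infty$, and satisfy $s_m-s_{m-1}=x_m<\varepsilon$ for every $m\geq N$.

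Next I would apply a first-passage argument to this staircase. For $t=0$ the empty subsequence gives the value $0$ exactly. For $t>0$, since $s_m\to+\infty$ there is a smallest index $m$ with $s_m>t$, and then $s_{m-1}\leq t<s_m$. Because the single jump $s_m-s_{m-1}=x_m$ is smaller than $\varepsilon$, both $s_{m-1}$ and $s_m$ lie within $\varepsilon$ of $t$. As $s_{m-1}=\sum_{n=N}^{m-1}x_n$ is the sum of a finite (in fact contiguous, hence admissible) subsequence, the set of attainable sums meets the interval $(t-\varepsilon,t+\varepsilon)$. Since $t\geq 0$ and $\varepsilon>0$ were arbitrary, this is precisely density in $[0,\infty)$.

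The one delicate point, and the step I would treat most carefully, is the interplay of the two hypotheses at the crossing: one must simultaneously guarantee that the staircase rises above $t$ (which needs genuine divergence of the tail, not merely $x_n\not\to 0$) and that the overshoot at the crossing is small (which needs the uniform bound $x_m<\varepsilon$ secured by the limit condition). Neither hypothesis alone suffices, so the crucial bookkeeping is to choose $N$ first and only then run the greedy crossing on the tail. This is exactly the content packaged by the cited proposition of \cite{12}, so I would either invoke it directly or reproduce the short staircase argument above.
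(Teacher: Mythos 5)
Your proposal is correct, but it is worth noting that the paper itself offers no proof of this lemma at all: it simply cites it as a consequence of a Proposition on p.~863 of Nitecki's article \cite{12} on subsum sets of null sequences. Your greedy first-passage argument is therefore a genuinely different (and more self-contained) route: fixing $N$ with $x_n<\varepsilon$ for $n\geq N$, noting that the tail $\sum_{n\geq N}x_n$ still diverges, and letting the staircase of contiguous partial sums $s_m=\sum_{n=N}^{m}x_n$ cross the target $t$ is exactly the right interplay of the two hypotheses, and your bookkeeping (choose $N$ first, then run the crossing on the tail) is sound. The only point to tidy is the edge case where the first crossing happens immediately, i.e.\ the minimal $m$ equals $N$: there $s_{m-1}=0$ is the empty sum, which may or may not be admitted as a ``finite subsequence''; but since you observed that $s_m$ itself also lies within $\varepsilon$ of $t$ (as $t<s_m=s_{m-1}+x_m<t+\varepsilon$), the nonempty sum $s_m$ always suffices, so no gap results. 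In exchange for a few extra lines, your approach buys a proof readable without consulting \cite{12}, whereas the paper's citation buys brevity and places the lemma in the broader context of Nitecki's theory of subsum sets; either is acceptable, and your elementary argument is complete as written.
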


We will also need the following lemma.

\begin{lemma}
Let $p,q$ be two primes such that $q$ is odd and $q\mid p+1$, and let
\begin{equation}
G_{p,q}^n=\left(C_p^2\rtimes C_q\right)\times C_q^n,\, n\geq 0.\nonumber
\end{equation}Then 
\begin{equation}
d_{CLT}(G_{p,q}^n)=\frac{3n+5}{3n+6}\,.\nonumber
\end{equation}
\end{lemma}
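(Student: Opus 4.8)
The plan is to compute the two quantities $D(G)$ and $\tau(|G|)$ for $G=G_{p,q}^n$ directly. First I record that $|G|=p^2q^{n+1}$, so its divisors are exactly the numbers $p^aq^b$ with $0\le a\le 2$ and $0\le b\le n+1$, whence $\tau(|G|)=3(n+2)=3n+6$, which is the target denominator. It then suffices to prove that $G$ has a subgroup of order $p^aq^b$ for every such pair $(a,b)$ except $(a,b)=(1,n+1)$; this yields $D(G)=3n+6-1=3n+5$ and the claimed value of $d_{CLT}(G)$.

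For the existence part (the routine direction) I would exhibit explicit subgroups. Writing $H=C_p^2\rtimes C_q$ and $A=C_q^n$, so that $G=H\times A$, the orders $q^b$ for $0\le b\le n+1$ are all realized inside a Sylow $q$-subgroup of $G$, which is elementary abelian of order $q^{n+1}$ and hence has subgroups of every $q$-power order. For $a=2$ the subgroups $C_p^2\times B$ with $B\le A$ of order $q^b$ handle $0\le b\le n$, and $b=n+1$ is realized by $G$ itself. For $a=1$ and $0\le b\le n$ the subgroups $\langle x\rangle\times B$, with $x\in C_p^2$ of order $p$ and $B\le A$ of order $q^b$, do the job.

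The heart of the argument, and the main obstacle, is to show that $G$ has no subgroup of order $pq^{n+1}$. Suppose $K\le G$ with $|K|=pq^{n+1}$. Then a Sylow $q$-subgroup $Q$ of $K$ has order $q^{n+1}$, so it is a full Sylow $q$-subgroup of $G$; projecting $Q$ to $H$ and comparing orders shows $Q=S\times A$ for some Sylow $q$-subgroup $S$ of $H$. Let $x\in K$ be a nontrivial $p$-element; since the $p$-elements of $G$ all lie in $C_p^2\times\{1\}$, we have $x\in C_p^2$. Now the hypothesis enters: because $q$ is odd and $q\mid p+1$, the multiplicative order of $p$ modulo $q$ equals $2$, so $S$ acts irreducibly on $C_p^2$; hence the $S$-submodule generated by the nonzero vector $x$ is all of $C_p^2$, giving $\langle x,S\rangle=C_p^2\rtimes S=H$. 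But $S\le Q\le K$ and $A\le Q\le K$, so $K\supseteq\langle x,S\rangle A=HA=G$, contradicting $|K|<|G|$.

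Finally I would assemble the count: the realized orders are $q^b$ and $p^2q^b$ for all $0\le b\le n+1$ (that is $2(n+2)$ orders) together with $pq^b$ for $0\le b\le n$ (a further $n+1$ orders), giving $D(G)=2(n+2)+(n+1)=3n+5$ and therefore $d_{CLT}(G)=\frac{3n+5}{3n+6}$. The only subtlety worth flagging is that the irreducibility of the $C_q$-action must genuinely be used: it is exactly what rules out the order $pq^{n+1}$, and it is the precise point at which the assumption ``$q$ odd and $q\mid p+1$'' is needed.
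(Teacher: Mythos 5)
Your proposal is correct and matches the paper's approach: the paper's proof consists of the single assertion that $G_{p,q}^n$ has subgroups of every possible order except $p\cdot q^{n+1}$, which is exactly the fact you establish, with the details (the count $\tau(|G|)=3n+6$, the explicit subgroups, and the irreducibility argument ruling out order $pq^{n+1}$) supplied in full. In particular, your observation that irreducibility of the $C_q$-action on $C_p^2$ (forced by $q$ odd, $q\mid p+1$) is the crux is precisely the content the paper leaves as ``easy to check.''
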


\begin{proof}
It is easy to check that $G_{p,q}^n$ has subgroups of all possible orders except $p\cdot q^{n+1}$.
\end{proof}

We are now able to prove our second main result.

\bigskip\noindent{\bf Proof of Theorem 1.2.} 
Let $I=\{n_1,...,n_k\}\subset\mathbb{N}$ and let $q_1,...,q_k$ be distinct odd primes. By Dirichlet's theorem, we can choose distinct primes $p_1,...,p_k$ such that $q_i\mid p_i+1$, for all $i=1,...,k$. We remark that these primes can be chosen such that 
\begin{equation}
\{p_i,q_i\}\cap\{p_j,q_j\}=\emptyset, \mbox{ for all } i\neq j.\nonumber
\end{equation}Since $d_{CLT}$ is multiplicative, Lemma 3.2 shows that 
\begin{equation}
d_{CLT}\left(\prod_{i=1}^kG_{p_i,q_i}^{n_i}\right)=\prod_{i=1}^k\frac{3n_i+5}{3n_i+6}\nonumber
\end{equation}and so 
\begin{equation}
A=\left\{\prod_{n\in I}\frac{3n+5}{3n+6} \,\bigg|\, I\subset\mathbb{N}, |I|<\infty\right\}\subseteq {\rm Im}(d_{CLT}).\nonumber
\end{equation}Thus it suffices to prove that $A$ is dense in $[0,1]$. 

Consider the sequence $(x_n)_{n\geq 1}\subset (0,\infty)$, where $x_n=\ln(\frac{3n+6}{3n+5})$. Clearly, $\lim_{n\rightarrow\infty}x_n=0$. We have
\begin{equation}
\lim_{n\rightarrow\infty}\frac{x_n}{\frac{1}{n}}=\frac{1}{3}\,.\nonumber
\end{equation}Therefore, since the series $\sum_{n\geq 1}\frac{1}{n}$ is divergent, we deduce that the series
$\sum_{n\geq 1}x_n$ is also divergent. So, all hypotheses of Lemma 3.1 are satisfied, implying that
\begin{equation}
\overline{\left\{\sum_{n\in I}x_n \,\bigg|\, I\subset\mathbb{N}^*, |I|<\infty\right\}}=[0,\infty).\nonumber
\end{equation}This means
\begin{equation}
\overline{\left\{\ln\left(\prod_{n\in I}\frac{3n+6}{3n+5}\right) \,\bigg|\, I\subset\mathbb{N}^*, |I|<\infty\right\}}=[0,\infty)\nonumber
\end{equation}or equivalently
\begin{equation}
\overline{\left\{\prod_{n\in I}\frac{3n+6}{3n+5} \,\bigg|\, I\subset\mathbb{N}^*, |I|<\infty\right\}}=[1,\infty).\nonumber
\end{equation}Then
\begin{equation}
\overline{\left\{\prod_{n\in I}\frac{3n+5}{3n+6} \,\bigg|\, I\subset\mathbb{N}^*, |I|<\infty\right\}}=[0,1]\nonumber
\end{equation}and consequently
\begin{equation}
\overline{A}=[0,1].\nonumber
\end{equation}

The proof of Theorem 1.2 is now complete.$\qed$
\bigskip

Finally, we note that for fixed $p,q$, the above groups $(G_{p,q}^n)_{n\geq 0}$ are non-CLT and satisfy $\lim_{n\rightarrow\infty}d_{CLT}(G_{p,q}^n)=1$, providing a direct proof of Corollary 1.4.
\bigskip

\noindent{\bf Acknowledgements.} The author is grateful to the reviewer for remarks which improve the previous version of the paper.

\vspace*{3ex}\small

\hfill
\begin{minipage}[t]{5cm}
Marius T\u arn\u auceanu \\
Faculty of  Mathematics \\
``Al.I. Cuza'' University \\
Ia\c si, Romania \\
e-mail: {\tt tarnauc@uaic.ro}
\end{minipage}

\end{document}